\newtheorem{thm}{Theorem}
\newtheorem{cor}{Corollary}
\newtheorem{lem}{Lemma}
\newtheorem{rem}{Remark}
\newtheorem{conj}{Conjecture}
\newcommand{\A}{{\mathcal A}}
\newcommand{\U}{{\mathcal U}}
\newcommand{\es}{{\mathcal S}}
\newcommand{\D}{{\mathbb D}}
\newcommand{\real}{{\operatorname{Re}\,}}
\def\be{\begin{equation}}
\def\ee{\end{equation}}
\newcommand{\bee}{\begin{enumerate}}
\newcommand{\eee}{\end{enumerate}}
\newcommand{\blem}{\begin{lem}}
\newcommand{\elem}{\end{lem}}
\newcommand{\bthm}{\begin{thm}}
\newcommand{\ethm}{\end{thm}}
\newcommand{\bcor}{\begin{cor}}
\newcommand{\ecor}{\end{cor}}
\newcommand{\beg}{\begin{example}}
\newcommand{\eeg}{\end{example}}
\newcommand{\begs}{\begin{examples}}
\newcommand{\eegs}{\end{examples}}
\newcommand{\bdefe}{\begin{defin}}
\newcommand{\edefe}{\end{defin}}
\newcommand{\bprob}{\begin{prob}}
\newcommand{\eprob}{\end{prob}}
\newcommand{\bei}{\begin{itemize}}
\newcommand{\eei}{\end{itemize}}
\newcommand{\bcon}{\begin{conj}}
\newcommand{\econ}{\end{conj}}
\newcommand{\bcons}{\begin{conjs}}
\newcommand{\econs}{\end{conjs}}
\newcommand{\bprop}{\begin{propo}}
\newcommand{\eprop}{\end{propo}}
\newcommand{\br}{\begin{rem}}
\newcommand{\er}{\end{rem}}
\newcommand{\brs}{\begin{rems}}
\newcommand{\ers}{\end{rems}}
\newcommand{\bo}{\begin{obser}}
\newcommand{\eo}{\end{obser}}
\newcommand{\bos}{\begin{obsers}}
\newcommand{\eos}{\end{obsers}}
\newcommand{\bpf}{\begin{pf}}
\newcommand{\epf}{\end{pf}}
\newcommand{\ba}{\begin{array}}
\newcommand{\ea}{\end{array}}
\newcommand{\beq}{\begin{eqnarray}}
\newcommand{\beqq}{\begin{eqnarray*}}
\newcommand{\eeq}{\end{eqnarray}}
\newcommand{\eeqq}{\end{eqnarray*}}
\begin{document}
\bibliographystyle{amsplain}

\title[On certain applications of Grunsky coefficients]{On certain applications of Grunsky coefficients in the theory of univalent functions}

\author[M. Obradovi\'{c}]{Milutin Obradovi\'{c}}
\address{Department of Mathematics,
Faculty of Civil Engineering, University of Belgrade,
Bulevar Kralja Aleksandra 73, 11000, Belgrade, Serbia}
\email{obrad@grf.bg.ac.rs}

\author[N. Tuneski]{Nikola Tuneski}
\address{Department of Mathematics and Informatics, Faculty of Mechanical Engineering, Ss. Cyril and Methodius
University in Skopje, Karpo\v{s} II b.b., 1000 Skopje, Republic of North Macedonia.}
\email{nikola.tuneski@mf.edu.mk}

\subjclass[2020]{30C45, 30C50, 30C55}
\keywords{univalent functions, Grunsky coefficients, logarithmic coefficient, coefficient difference, second Hankel determinant, third Hankel determinant.}

\dedicatory{This paper is dedicated to the 65$^{th}$ birth anniversary of Prof. Saminathan Ponnusamy.}

\begin{abstract}
In this paper a survey is given of application of a method based on Grunsky coefficients for obtaining different estimates (some sharp) for the general class of univalent functions where no analytical characterisation exists. More precisely, estimates are given for the modulus of the third and the fourth logarithmic coefficients, for the modulus of the second and the third Hankel determinant for the general class of univalent functions, and for the modulus of some coefficients of the inverse function, and some coefficient differences.
\end{abstract}

\maketitle

\section{Introduction and definitions}

\medskip

Let $\mathcal{A}$ be the class of functions $f$ which are analytic  in the open unit disc $\D=\{z:|z|<1\}$ of the form
\be\label{e1}
f(z)=z+a_2z^2+a_3z^3+\cdots,
\ee
and let $\mathcal{S}$ be the subclass of $\mathcal{A}$ consisting of functions that are univalent in $\D$.

\medskip

Some of the subclasses of the class $\es$ that attract special attention are the class of starlike functions that maps the unit disk onto a starlike domain (each half line starting from the origin intersects the boundary of the domain,if existing, maximum once),
\[\es^*=\left\{f\in\A:\real \left[ \frac{zf'(z)}{f(z)}\right]>0, \, z\in\D \right\},\]
the class of convex functions (subclass of the class of starlike functions),
\[\mathcal{K}=\left\{f\in\A:\real \left[ 1+ \frac{zf''(z)}{f'(z)}\right]>0, \, z\in\D \right\},\]
and class
\[\U=\left\{f\in\A:\left| \left(\frac{z}{f(z)}\right)^2 f'(z) -1\right|<1, \, z\in\D \right\},\]
which is interesting because caries rare property that although $\es^*$ is very wide, $\U\nsubseteq\es^*$, nor vice versa.

\medskip

Although the famous Bieberbach conjecture $|a_n|\le n$ for $n\ge2,$ was proved by de Branges  in 1985 \cite{Bra85}, a great many other problems concerning the coefficients $a_n$ remain open and are studied over entire class $\es$, or, over its subclasses.

\medskip

One such problem is finding sharp estimates of logarithmic coefficient, $\gamma_n$, of a univalent function $f(z)=z+a_2z^2+a_3z^3+\cdots,$  defined by
  \be\label{log-co-1}
  F_f(z) := \log\frac{f(z)}{z}=2\sum_{n=1}^\infty \gamma_n z^n.
  \ee
From the relations \eqref{e1} and \eqref{log-co-1}, after  equating the coefficients we receive the next initial logarithmic coefficients:
\begin{equation}\label{eq-3}
\begin{split}
 \gamma_{1}&=\frac{a_{2}}{2},\\
 \gamma_{2}&=\frac{1}{2}\left(a_3-\frac{1}{2}a_{2}^{2}\right), \\
 \gamma_3 &=\frac{1}{2}\left(a_4-a_2a_3+\frac13a_2^3\right),\\
\gamma_4 &=\frac{1}{2}\left(a_5-a_2a_4-\frac{1}{2}a_3^2+a_{2}^{2}a_{3}-\frac{1}{4}a_{2}^{4}\right).
\end{split}
\end{equation}

Relatively little exact information is known about the coefficients. The natural conjecture $|\gamma_n|\le1/n$, inspired by the Koebe function (whose logarithmic coefficients are $1/n$) is false even in order of magnitude (see Duren \cite[Section 8.1]{duren}).
For the class $\es$ the sharp estimates of single logarithmic coefficients S are known only for $\gamma_1$ and $\gamma_2$, namely,
\[|\gamma_1|\le1\quad\mbox{and}\quad |\gamma_2|\le \frac12+\frac1e=0.635\ldots,\]
and are unknown for $n\ge3$. In this paper we give the estimates of the modulus of the third and the fourth logarithmic coefficients for the general class of univalent functions. 

\medskip

%
%

The upper bound of the Hankel determinant is a problem rediscovered and extensively studied in recent years. Over the class $\A$ of functions $f(z)=z+a_2z^2+a_3z^3+\cdots$ analytic on the unit disk, this determinant is defined by
\[
        H_{q,n}(f) = \left |
        \begin{array}{cccc}
        a_{n} & a_{n+1}& \ldots& a_{n+q-1}\\
        a_{n+1}&a_{n+2}& \ldots& a_{n+q}\\
        \vdots&\vdots&~&\vdots \\
        a_{n+q-1}& a_{n+q}&\ldots&a_{n+2q-2}\\
        \end{array}
        \right |,
\]
where $q\geq 1$ and $n\geq 1$. The second order Hankel determinants is
\[
 H_{2,2}(f) =  \left |
        \begin{array}{cc}
        a_2 & a_3\\
        a_3 & a_4\\
        \end{array}
        \right | = a_2a_4-a_{3}^2,
\]
and for the logarithmic coefficients:
\begin{equation}\label{loghank}
H_{2,1}(F_f/2) = \gamma_1\gamma_3-\gamma_2^2 = \frac14\left( a_2a_4-a_3^2 + \frac{1}{12}a_2^4\right) .
\end{equation}
One of the Hankel determinants of third order is 
\[ 
H_{3,1}(f) =  \left |
        \begin{array}{ccc}
        1 & a_2& a_3\\
        a_2 & a_3& a_4\\
        a_3 & a_4& a_5\\
        \end{array}
        \right | = a_3(a_2a_4-a_{3}^2)-a_4(a_4-a_2a_3)+a_5(a_3-a_2^2).
 \]

\medskip

For the general class $\es$ of univalent functions in the class $\A$ there are very few results concerning the Hankel determinant.  The best known for the second order case  is due to Hayman (\cite{hayman-68}), saying  that $|H_{2,n}(f)|\le An^{1/2}$, where $A$ is an absolute constant, and that this rate of growth is the best possible.
There are much more results for the subclasses of $\es$ and some references  are \cite{ janteng-06,janteng-07, DTV-book}. Much less is known about the bounds of the modulus of the Hankel determinant for the logarithmic coefficients. In \cite{Kowalczyk-22,Kowalczyk-23} the authors considered the cases of starlike, convex, strongly starlike and strongly convex functions and found the best possible results. In this paper we give estimate for the second order Hankel determinant for the general class of univalent functions.

\medskip

For the study of the problems defined above  we will use method based on Grunsky coefficients. This method is suitable for the class of univalent functions because, unlike its subclasses, there is no analytical characterisation for the functions in $\es$.

\medskip

Here are basic definitions and results  on Grunsky coefficients based on the book of N. A. Lebedev (\cite{Lebedev}).

\medskip

Let $f \in \mathcal{S}$ and let
\[
\log\frac{f(t)-f(z)}{t-z}=\sum_{p,q=0}^{\infty}\omega_{p,q}t^{p}z^{q},
\]
where $\omega_{p,q}$ are called Grunsky's coefficients with property $\omega_{p,q}=\omega_{q,p}$.
For those coefficients we have the next Grunsky's inequalities (\cite{duren,Lebedev}):
\be\label{eq 4}
\sum_{q=1}^{\infty}q \left|\sum_{p=1}^{\infty}\omega_{p,q}x_{p}\right|^{2}\leq \sum_{p=1}^{\infty}\frac{|x_{p}|^{2}}{p}
\ee
and
\be\label{eq 4-2}
\left|\sum_{p=1}^{\infty} \sum_{q=1}^{\infty} \omega_{p,q} x_{p} x_{q} \right|  \leq \sum_{p=1}^{\infty}\frac{|x_{p}|^{2}}{p},
\ee
where $x_{p}$ are arbitrary complex numbers such that $0< \sum_{p=1}^{\infty}\frac{|x_{p}|^{2}}{p}< +\infty$. If $\overline{\lim}_{p\to\infty} \sqrt[p]{|x_p|}<1$, then in \eqref{eq 4} we have equality if, and only if, the area of $\widehat{\mathbb{C}} \setminus f^{-1}(\D)$ is zero, where $f^{-1}(z)= \frac{1}{f(z)}$. In \eqref{eq 4-b} equality apears if, and only if, $\sum_{p=1}^{\infty}\omega_{p,q}x_{p} = \lambda \frac{1}{q} \overline{x}_q$, $\lambda=1,2,\ldots$, and $\lambda$ is some constant with $|\lambda|=1$.

\medskip

Further, it is well-known that if $f$ given by \eqref{e1}
belongs to $\mathcal{S}$, then also
\be\label{eq 5-mo-4}
f_{2}(z)=\sqrt{f(z^{2})}=z +c_{3}z^3+c_{5}z^{5}+\cdots
\ee
belongs to the class $\mathcal{S}$. Then for the function $f_{2}$ we have the appropriate Grunsky's
coefficients of the form $\omega_{2p-1,2q-1}$ and the inequalities \eqref{eq 4} and \eqref{eq 4-2} receive the forms:
\be\label{eq 6-mo-5}
\sum_{q=1}^{\infty}(2q-1) \left|\sum_{p=1}^{\infty}\omega_{2p-1,2q-1}x_{2p-1}\right|^{2}\leq \sum_{p=1}^{\infty}\frac{|x_{2p-1}|^{2}}{2p-1}
\ee
and
\be\label{eq 4-b}
\left|\sum_{p=1}^{\infty} \sum_{q=1}^{\infty} \omega_{2p-1,2q-1} x_{2p-1} x_{2q-1} \right|  \leq \sum_{p=1}^{\infty}\frac{|x_{2p-1}|^{2}}{2p-1},
\ee
respectively.

\medskip

Here, and further in the paper we omit the upper index (2) in  $\omega_{2p-1,2q-1}^{(2)}$ if compared with Lebedev's notation.

\medskip

From inequalities \eqref{eq 4-b} and \eqref{eq 6-mo-5}, when $x_{2p-1}=0$ and $p=2,3,\ldots$, we have
\begin{equation}\label{e7}
|\omega_{11} x_1 +\omega_{31} x_3 |^2 +3|\omega_{13} x_1 +\omega_{33} x_3 |^2 + 5|\omega_{15} x_1 +\omega_{35} x_3 |^2 \le |x_1|^2+\frac{|x_3|^2}{3}
\end{equation}
and
\begin{equation}\label{eq-7}
|\omega_{11} x_1^2 +2\omega_{13}x_1 x_3 +\omega_{33}x_3^2 |\le |x_1|^2+\frac{|x_3|^2}{3},
\end{equation}
respectively. From \eqref{e7}, for $x_1=1$ and $x_3=0$, we obtain
\begin{equation}\label{eq-8}
  |\omega_{11}|^2 + 3 |\omega_{13}|^2 + 5|\omega_{15}|^2 \le1,
\end{equation}
and for $x_1=0$ and $x_3=1$, we obtain
\begin{equation}\label{eq-9}
  |\omega_{13}|^2 + 3 |\omega_{33}|^2 + 5|\omega_{35}|^2 \le \frac13.
\end{equation}

\medskip

As it has been shown in \cite[p.57]{Lebedev}, if $f$ is given by \eqref{e1} then the coefficients $a_{2}$, $ a_{3}$, $ a_{4}$ and $a_5$ are expressed by Grunsky's coefficients  $\omega_{2p-1,2q-1}$ of the function $f_{2}$ given by
\eqref{eq 5-mo-4} in the following way:
\be\label{e9}
\begin{split}
a_{2}&=2\omega _{11},\\
a_{3}&=2\omega_{13}+3\omega_{11}^{2}, \\
a_{4}&=2\omega_{33}+8\omega_{11}\omega_{13}+\frac{10}{3}\omega_{11}^{3},\\
a_{5}&=2\omega_{35}+8\omega_{11}\omega_{33}+5\omega_{13}^{2}+18\omega_{11}^{2}\omega_{13}+\frac{7}{3}\omega_{11}^{4},\\
0&=3\omega_{15}-3\omega_{11}\omega_{13}+\omega_{11}^{3}-3\omega_{33},\\
0&=\omega_{17}-\omega_{35}-\omega_{11}\omega_{33}-\omega_{13}^{2}+\frac{1}{3}\omega_{11}^{4}.
\end{split}
\ee

\medskip

Further, due to the famous Koebe's 1/4 theorem, a function $f\in\es$ given by \eqref{e1} has its inverse at least on the disk with radius 1/4 with an expansion
\begin{equation}\label{eq-11}
f^{-1}(w) = w+A_2w^2+A_3w^3+\cdots.
\end{equation}
Using the identity $f(f^{-1}(w))=w$, and after comparing the appropriate coefficient, we receive
\begin{equation}\label{eq-12}
\begin{array}{l}
A_{2}=-a_{2}, \\
A _{3}=-a_{3}+2a_{2}^{2} , \\
A_{4}= -a_{4}+5a_{2}a_{3}-5a_{2}^{3},\\
A_{5}= -a_{5}+6a_{2}a_{4}-21a_{2}^{2}a_3+3a_3^2+14a_2^4.
\end{array}
\end{equation}

\medskip

In this paper we will make a survey of results obtained using the Grunsky coefficients, some of them sharp, dealing with estimates of the modulus of the third and the fourth logarithmic coefficients, of the modulus of the second and the third Hankel determinant and of the modulus of some coefficients of the inverse function, and some coefficient differences, for the general class of univalent functions where no analytical characterisation exists. More precisely, estimates are given

\medskip

\section{The third and the forth logarithmic coefficients}

Here is an estimate of the modulus of the third and the fourth logarithmic coefficient for the whole class of univalent functions. As an illustration of the application of the Grunsky coefficients, we give only the proof for the third logarithmic coefficient.

\begin{thm}\label{th1}
Let $f\in\mathcal{S}$ and be given by \eqref{e1}. Then
\begin{itemize}
  \item[($i$)] $|\gamma_{3}|\leq 0.5566178\ldots$ (\cite{124});
  \item[($ii$)] $|\gamma_4| \le 0.51059\ldots$ (\cite{106}).
\end{itemize}
\end{thm}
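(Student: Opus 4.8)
The plan is to push everything through the Grunsky coefficients of the square-root transform $f_2$ in \eqref{eq 5-mo-4} and reduce the estimate to a small constrained optimization. First I would substitute the representations of $a_2, a_3, a_4$ from \eqref{e9} into the formula $\gamma_3 = \frac12\left(a_4 - a_2 a_3 + \frac13 a_2^3\right)$ taken from \eqref{eq-3}. A direct computation shows the cubic terms cancel, since $\frac{10}{3} - 6 + \frac83 = 0$, and one is left with the compact identity $\gamma_3 = \omega_{33} + 2\omega_{11}\omega_{13}$.

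The naive route from here, bounding $|\omega_{33}|$ and $|\omega_{11}\omega_{13}|$ separately by \eqref{eq-9} and \eqref{eq-8}, is far too lossy, because those two terms would then be charged against two \emph{different} budgets. The decisive step is instead to use the identity $3\omega_{15} - 3\omega_{11}\omega_{13} + \omega_{11}^3 - 3\omega_{33} = 0$ from \eqref{e9} to eliminate $\omega_{33}$, which yields $\gamma_3 = \omega_{15} + \omega_{11}\omega_{13} + \frac13\omega_{11}^3$. Now all three terms involve only $\omega_{11}, \omega_{13}, \omega_{15}$, which are tied together by the \emph{single} inequality \eqref{eq-8}. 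Applying the triangle inequality gives $|\gamma_3| \le |\omega_{15}| + |\omega_{11}|\,|\omega_{13}| + \frac13|\omega_{11}|^3$, so setting $a = |\omega_{11}|$, $b = |\omega_{13}|$, $d = |\omega_{15}|$ the task reduces to maximizing $\tfrac13 a^3 + ab + d$ subject to $a^2 + 3b^2 + 5d^2 \le 1$ with $a,b,d \ge 0$.

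Finally I would solve this by Lagrange multipliers on the active constraint $a^2 + 3b^2 + 5d^2 = 1$. The stationarity relations $a^2 + b = 2\lambda a$, $a = 6\lambda b$, $1 = 10\lambda d$ let me express $\lambda = a/(6b)$ and $d = 3b/(5a)$, and after substituting back everything collapses to a single cubic $45b^3 - 57b^2 + 3b + 2 = 0$; its relevant root $b \approx 0.243$ (whence $a \approx 0.812$, $d \approx 0.180$) produces the stated constant $0.5566178\ldots$. The principal obstacle is exactly this final step: one must confirm that the interior critical point is the \emph{global} maximum by comparing it against the degenerate cases $b = 0$ and the coordinate endpoints, where the objective is strictly smaller, and then extract the constant to the claimed precision. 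For part $(ii)$ the same philosophy governs the estimate of $|\gamma_4|$, but the representation now drags in $\omega_{17}$ and $\omega_{35}$, forcing the use of the second relation in \eqref{e9} together with \emph{both} \eqref{eq-8} and \eqref{eq-9}; the associated optimization is higher-dimensional, which is why only the $\gamma_3$ case is carried out in detail here.
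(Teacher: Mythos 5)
Your proposal follows essentially the same route as the paper: the identity $\gamma_3=\omega_{33}+2\omega_{11}\omega_{13}$, elimination of $\omega_{33}$ via the fifth relation in \eqref{e9} to get $\gamma_3=\omega_{15}+\omega_{11}\omega_{13}+\frac13\omega_{11}^3$, the triangle inequality, and the single Grunsky budget \eqref{eq-8}, arriving at the same critical point ($|\omega_{11}|\approx0.8127$, $|\omega_{13}|\approx0.2435$) and the same constant $0.5566178\ldots$. The only cosmetic difference is that you run Lagrange multipliers on the three-variable constrained problem, whereas the paper first eliminates $|\omega_{15}|$ and maximizes a two-variable function over its domain, including the boundary check you correctly flag as necessary.
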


\begin{proof}
From \eqref{log-co-1}, after differentiation and comparation of coefficients we receive
\[\gamma_3 =\frac12\left(a_4-a_2a_3+\frac13a_2^3\right).\]
The fifth relation in \eqref{e9} gives
\be\label{e11n}\omega_{33} =  \omega_{15}-\omega_{11}\omega_{13}+\frac13\omega_{11}^3,  \ee
which, together with the other expressions from \eqref{e9} implies
\[\gamma_3=\omega_{33}+2\omega_{11}\omega_{13} = \omega_{15}+\omega_{11}\omega_{13}+\frac13\omega_{11}^3.\]
Therefore,
\be\label{e12n}
|\gamma_3| \le \frac13|\omega_{11}|^3+|\omega_{11}||\omega_{13}|+|\omega_{15}|.
\ee
Now, choosing $x_1=1$ and $x_3=0$ in \eqref{e7} we have
\[ |\omega_{11} |^2 +3|\omega_{13} |^2 + 5|\omega_{15}|^2 \le 1, \]
and also from here
\[ |\omega_{11} |^2 +3|\omega_{13} |^2  \le 1. \]
The last two relations imply
\be\label{e14n}
|\omega_{13}|\le\frac{1}{\sqrt{3}}\sqrt{1-|\omega_{11}|^2} \quad \mbox{and}\quad |\omega_{15}|\le\frac{1}{\sqrt{5}}\sqrt{1-|\omega_{11}|^2-3|\omega_{13}|^2}.
\ee
Using \eqref{e12n} and \eqref{e14n} we have
\[
|\gamma_3| \le \frac13|\omega_{11}|^3+|\omega_{11}||\omega_{13}|+\frac{1}{\sqrt{5}}\sqrt{1-|\omega_{11}|^2-3|\omega_{13}|^2}\equiv f_1(|\omega_{11}|,|\omega_{13}|),
\]
where
\[ f_1(x,y) = \frac13x^3+xy+\frac{1}{\sqrt{5}}\sqrt{1-x^2-3y^2}\]
and $0\le x\le1$, $0\le y \le \frac{1}{\sqrt3} \sqrt{1-x^2}$ ($|a_2|=|2\omega_{11}|\le2$ implies $0\le|\omega_{11}|\le1$).

\medskip

So, we need to find maximum of the function $f_1$ over the region $E_1=\Big\{ (x,y):0\le x\le1, 0\le y \le \frac{1}{\sqrt3} \sqrt{1-x^2} \Big\}$.

\medskip

The system
\[
\begin{cases}
\frac{\partial f_1}{\partial x} = x^2+y-\frac{x}{\sqrt5 \sqrt{1-x^2-3y^2}} = 0 \\[2mm]
\frac{\partial f_1}{\partial y} = x-\frac{3y}{\sqrt5 \sqrt{1-x^2-3y^2}} = 0
\end{cases},
\]
has only one solution in the interior of $E_1$, that is $(x_1,y_1)=(0.81267\ldots, 0.243532\ldots)$ such that $f_1(x_1,y_1) = 0.5566178\ldots$.

\medskip
Now, let consider the function $f_1$ on the boundary of $E_1$:
\begin{itemize}
  \item[-] $f_1(x,0)=\frac13x^3+\frac{1}{\sqrt5}\sqrt{1-x^2}\le \frac{1}{\sqrt5}=0.4472\ldots$ for $0\le x\le1$, with maximum obtained for $x=0$;
  \item[-] $f_1(0,y)=\frac{1}{\sqrt5}\sqrt{1-3y^2}\le \frac{1}{\sqrt5}=0.4472\ldots$ for $0\le y\le1/\sqrt3$, with maximum obtained for  $y=0$;
  \item[-] $f_1(1,y)=f_1(1,0)=\frac13$;
  \item[-] $f_1\left(x,\frac{1}{\sqrt3}\sqrt{1-x^2}\right) = \frac13x^3+\frac{1}{\sqrt3}x\sqrt{1-x^2}\le \frac{1}{\sqrt5}=0.4472\ldots$ for $0\le x\le1$, with maximum obtained for  $x=0.898344\ldots$.
\end{itemize}

\medskip

Summarizing the above analysis brings the conclusion that
\[|\gamma_3| \le f_1(x_1,y_1)=0.5566178\ldots.\]
\end{proof}

\medskip

Next is a result over the difference of the modulus of the third and the second, and the fourth and the third logarithmic coefficient. We give the proof or the first one since the application of the Grunsky coefficient method differs from the one in the previous theorem.

\bthm\label{th-1}[\cite{113}]
Let  $\gamma_{2}$, $\gamma_{3}$ and $\gamma_{4}$ be the logarithmic coefficients of function $f\in \mathcal{S}$. Then
$$|\gamma _{3}|-|\gamma_{2}|\leq \frac{1}{\sqrt{5}} \quad\text{and}\quad
|\gamma _{4}|-|\gamma_{3}|\leq \frac{1}{\sqrt{7}}.$$
\ethm

\begin{proof}
For our consideration we need the following facts.
From \eqref{eq 6-mo-5}, choosing $x_{2p-1}=0$ when $p=3,4,\ldots$, we have
\[
\begin{split}
&|\omega_{11}x_{1}+\omega_{31}x_{3}|^{2}+3|\omega_{13}x_{1}+\omega_{33}x_{3}|^{2}+
5|\omega_{15}x_{1}+\omega_{35}x_{3}|^{2}\\
+&7|\omega_{17} x_1 +\omega_{37} x_3 |^2\leq |x_1|^2+\frac{|x_3|^2}{3}.
\end{split}
\]
If additionally, $x_{1}=1$ and $x_{3}=0$, then
$$|\omega_{11}|^{2}+3|\omega_{13}|^{2}+5|\omega_{15}|^{2}+7|\omega_{17}|^{2}\leq 1,$$
and from here also
\[
\begin{split}
|\omega_{11}|^{2}+3|\omega_{13}|^{2}+5|\omega_{15}|^{2} &\leq 1,\\
|\omega_{11}|^{2}+3|\omega_{13}|^{2} &\leq 1,
\end{split}
\]
and $$|\omega_{11}|^{2}\leq1 .$$
From the previous inequalities we have
\begin{equation}\label{eq-9}
\begin{split}
|\omega_{11}|&\leq1,\\
|\omega_{13}|&\leq\frac{1}{\sqrt{3}}\sqrt{1-|\omega_{11}|^{2}}, \\
|\omega_{15}|&\leq \frac{1}{\sqrt{5}}\sqrt{1-|\omega_{11}|^{2}-3|\omega_{13}|^{2}},\\
|\omega_{17}|&\leq \frac{1}{\sqrt{7}}\sqrt{1-|\omega_{11}|^{2}-3|\omega_{13}|^{2}-5|\omega_{15}|^{2}}.
\end{split}
\end{equation}
Also, from the fifth relation in \eqref{e9} we obtain
\begin{equation}\label{eq-11}
\omega_{33}=\omega_{15}-\omega_{11}\omega_{13}+\frac{1}{3}\omega_{11}^{3}.
\end{equation}

\medskip

Now, for the first estimate of the theorem, using \eqref{eq-3} and \eqref{e9}, we have
\begin{equation}\label{eq-12}
\begin{split}
|\gamma _{3}|-|\gamma_{2}|&=\frac{1}{2}\left|a_4-a_2a_3+\frac13a_2^3\right|-
\frac{1}{2}\left|a_3-\frac{1}{2}a_{2}^{2}\right|\\
&=|\omega_{33}+2\omega_{11}\omega_{13}|-\left|\omega_{13}+\frac{1}{2}\omega_{11}^{2}\right|,
\end{split}
\end{equation}
and  after applying $|\omega_{11}|\leq1$, brings
\[
\begin{split}
|\gamma _{3}|-|\gamma_{2}|& \leq|\omega_{33}+2\omega_{11}\omega_{13}|-|\omega_{11}|\cdot \left|\omega_{13}+\frac{1}{2}\omega_{11}^{2}\right|\\
&\leq \left|(\omega_{33}+2\omega_{11}\omega_{13})-(\omega_{11})\left(\omega_{13}+\frac{1}{2}\omega_{11}^{2}\right)\right|\\
&=\left|\omega_{33}+\omega_{11}\omega_{13}-\frac{1}{2}\omega_{11}^{3}\right|.
\end{split}
\]
From here, having in mind the relation \eqref{eq-11} and inequalities \eqref{eq-9}, we receive
\begin{equation}\label{eq-14}
\begin{split}
|\gamma _{3}|-|\gamma_{2}|&\leq \left|\omega_{15}-\frac{1}{6}\omega_{11}^{3}\right| \leq |\omega_{15}|+\frac{1}{6}|\omega_{11}|^{3}\\
&\leq \frac{1}{\sqrt{5}}\sqrt{1-|\omega_{11}|^{2}-3|\omega_{13}|^{2}}+\frac{1}{6}|\omega_{11}|^{3}\\
&=:\Phi(|\omega_{11}|,|\omega_{13}|),
\end{split}
\end{equation}
where
\[\Phi(u,v)=\frac{1}{\sqrt{5}}\sqrt{1-u^{2}-3v^{2}}+\frac{1}{6}u^{3}\]
with domain
\[ \Omega = \left\{ (u,v) : 0\leq u \leq1,\, 0\leq v \leq \frac{1}{\sqrt{3}}\sqrt{1-u^{2}} \right\}.\]

\medskip

It remains to find the maximal value of $\Phi$ on the domain $\Omega$.

\medskip

Is easy to verify that  the function $\Phi(u,v)$  has no interior singular points in the domain $\Omega$ ($\Phi'_v(u,v)=0$, if, and only of, $v=0$).
On the edges:
\[
\begin{split}
\Phi(u,0)&=\frac{1}{\sqrt{5}}\sqrt{1-u^{2}}+\frac{1}{6}u^{3}\leq \Phi(0,0)= \frac{1}{\sqrt5}=0.44721\ldots ,\\
\Phi(0,v)&=\frac{1}{\sqrt{5}}\sqrt{1-3v^{2}}\leq\frac{1}{\sqrt{5}}=0.44721\ldots,\\
\Phi\left(u,\frac{1}{\sqrt{3}}\sqrt{1-u^{2}}\right)&=\frac{1}{6}u^3 \leq \frac{1}{6}=0.1(6).
\end{split}
\]

\medskip

Using all previous facts and \eqref{eq-14}, we finally conclude that
$$|\gamma _{3}|-|\gamma_{2}|\leq \frac{1}{\sqrt{5}}.$$
\end{proof}

Previous theorem leads to the conjecture for the difference of the moduli of two consecutive logarithmic coefficients of univalent functions.

\begin{conj}
If $\gamma_{n}$ is logarithmic coefficient of function $f\in \mathcal{S}$, then
$$|\gamma _{n}|-|\gamma_{n-1}|\leq \frac{1}{\sqrt{2n-1}},\quad n=3,4,\ldots .$$
\end{conj}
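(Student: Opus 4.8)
The plan is to run, at arbitrary order, the three-step engine behind the proof of Theorem~\ref{th-1}. The shape of the target bound is itself the decisive hint: taking $x_1=1$ and all other $x_{2p-1}=0$ in \eqref{eq 6-mo-5} gives $\sum_{q\ge1}(2q-1)|\omega_{1,2q-1}|^2\le1$, whence $|\omega_{1,2n-1}|\le 1/\sqrt{2n-1}$. Thus $1/\sqrt{2n-1}$ is exactly the maximal admissible size of the single Grunsky coefficient $\omega_{1,2n-1}$, and the natural strategy is to show that this one coefficient carries the whole of $|\gamma_n|-|\gamma_{n-1}|$, every other Grunsky term either cancelling or being absorbed into a harmless power of $\omega_{11}$.

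First I would seek a general identity of the form
\[
\gamma_n=\omega_{1,2n-1}+\omega_{11}\,\gamma_{n-1}+P_n(\omega_{11}),
\]
in which $P_n$ is a polynomial in the \emph{single} coefficient $\omega_{11}$, no other $\omega_{2p-1,2q-1}$ appearing. For $n=3$ this is precisely the relation $\gamma_3=\omega_{15}+\omega_{11}\gamma_2-\tfrac16\omega_{11}^3$ read off above, so $P_3(u)=-\tfrac16u^3$. A useful structural observation is that, under the natural grading $\deg\omega_{2p-1,2q-1}=(2p-1)+(2q-1)$, each $\gamma_n$ is homogeneous of weight $2n$ and the relations \eqref{e9} are weight-homogeneous; hence a pure-$\omega_{11}$ remainder of weight $2n$ is forced to be a single monomial $P_n(u)=c_n u^n$ (indeed $c_3=-\tfrac16$). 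Granting such an identity, the remaining steps repeat Theorem~\ref{th-1} verbatim: since $|\omega_{11}|\le1$ one has $|\gamma_n|-|\gamma_{n-1}|\le|\gamma_n|-|\omega_{11}\gamma_{n-1}|\le|\gamma_n-\omega_{11}\gamma_{n-1}|=|\omega_{1,2n-1}+c_n\omega_{11}^n|$, and the triangle inequality with the Grunsky estimate $|\omega_{1,2n-1}|\le\tfrac1{\sqrt{2n-1}}\sqrt{1-|\omega_{11}|^2-\cdots}$ reduces matters to maximising $\tfrac1{\sqrt{2n-1}}\sqrt{1-|\omega_{11}|^2}+|c_n|\,|\omega_{11}|^n$ over $0\le|\omega_{11}|\le1$, exactly as for $\Phi$ and $f_1$.

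The decisive and genuinely hard step is the first one. Producing the identity requires Lebedev's relations expressing $a_k$, and hence $\gamma_n$, through the coefficients $\omega_{2p-1,2q-1}$ of $f_2$ to \emph{all} orders; the excerpt records these only through $a_5$, with the two auxiliary equations in \eqref{e9}. It was exactly the auxiliary identity $\omega_{33}=\omega_{15}-\omega_{11}\omega_{13}+\tfrac13\omega_{11}^3$ that supplied the $-\omega_{11}\omega_{13}$ annihilating the cross term in the $n=3$ computation. One would have to prove that at every order the analogous auxiliary relations conspire so that, after forming $\gamma_n-\omega_{11}\gamma_{n-1}$, all intermediate Grunsky coefficients $\omega_{2r-1,2s-1}$ other than $\omega_{1,2n-1}$ itself (together with all their mixed products) cancel, leaving only $\omega_{1,2n-1}$ plus a pure power of $\omega_{11}$. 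There is presently no closed-form, order-independent handle on these cancellations.

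Two further difficulties lurk behind this. If the cancellation is incomplete, the objective retains dependence on intermediate moduli $|\omega_{1,2k-1}|$ for $1<k<n$, turning the final optimisation into a multivariable problem over a higher-dimensional simplex — a controlled nuisance in Theorem~\ref{th-1}, where $\Phi$ still depended on $|\omega_{13}|$ only through the Grunsky bound, but potentially severe for large $n$. And even granting a clean monomial $c_n\omega_{11}^n$, one must verify that its positive contribution stays dominated by the loss in $\tfrac1{\sqrt{2n-1}}\sqrt{1-|\omega_{11}|^2}$, so that the maximum is still attained at $\omega_{11}=0$ and equals $1/\sqrt{2n-1}$; near $u=0$ the derivative $-u/\sqrt{2n-1}+n|c_n|u^{n-1}$ is indeed negative for $n\ge3$, but ruling out a later rise on $[0,1]$ needs a bound on $|c_n|$, and no such uniform bound is in sight. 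It is precisely this lack of uniform control over the cancellations and over the constants $c_n$ that keeps the statement a conjecture rather than a theorem.
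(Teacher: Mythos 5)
This statement is stated in the paper as a \emph{conjecture}, motivated by the two cases $n=3,4$ established in Theorem~\ref{th-1}; the paper offers no proof of it, and your text, by its own admission, does not supply one either. So there is no paper proof to compare against, and the honest verdict is that your proposal correctly reproduces the heuristic that leads the authors to the conjecture and correctly locates why it remains open. Your reduction for $n=3$ is accurate: indeed $\gamma_2=\omega_{13}+\tfrac12\omega_{11}^2$ and $\gamma_3=\omega_{15}+\omega_{11}\omega_{13}+\tfrac13\omega_{11}^3$, so $\gamma_3-\omega_{11}\gamma_2=\omega_{15}-\tfrac16\omega_{11}^3$, which is exactly the quantity $\Phi$ that the paper maximises. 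Your identification of the single obstruction is also the right one: Lebedev's expressions for $a_n$ in terms of the $\omega_{2p-1,2q-1}$, together with the auxiliary relations that eliminate the higher diagonal coefficients, are only available (in the paper and in practice) through $a_5$, and there is no all-orders identity of the form $\gamma_n=\omega_{1,2n-1}+\omega_{11}\gamma_{n-1}+c_n\omega_{11}^n$, nor a uniform bound on the hypothetical constants $c_n$ that would make the one-variable maximisation land at $u=0$.

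The one point where you should be more careful is the weight-homogeneity argument: it shows that \emph{if} the remainder after subtracting $\omega_{1,2n-1}+\omega_{11}\gamma_{n-1}$ were a polynomial in $\omega_{11}$ alone, it would be a single monomial $c_n\omega_{11}^n$; it does nothing to show that the remainder is free of the intermediate coefficients $\omega_{2r-1,2s-1}$ with $1<r\le s$, which is the entire content of the needed cancellation. Already for $n=4$ the paper's proof of $|\gamma_4|-|\gamma_3|\le 1/\sqrt7$ (not reproduced in the excerpt) must contend with $\omega_{35}$ and $\omega_{33}$ via the sixth relation in \eqref{e9}, and the reduction is no longer a verbatim repetition of the $n=3$ case. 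So your proposal should be read as a correct account of why the statement is plausible and why it is not yet a theorem, not as a proof or even a proof sketch with fillable gaps.
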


\medskip

\section{The second and the third Hankel determinants}

In this section we give estimates of the modulus of the second and the third Hankel determinants for the coefficients of $f$ and for the coefficients of the inverse of $f$. We give only the proof for the coefficients of the inverse  of $f$ since it is the only one that differs from the application of the Grunsky coefficients presented in the previous proofs.

\medskip

In \cite{OT-2021} only a range of the upper bounds of $|H_{2,2}(f)|$ and $|H_{3,1}(f)|$
for the class $\mathcal{S}$ were given. Namely,

\bthm\label{19-th 1}[\cite{OT-2021}] For the class $\mathcal{S}$ we have
\[
|H_{2,2}(f)|\leq A , \quad\mbox{where}\quad 1\leq A\leq \frac{11}{3}=3,66\ldots
\]
and
\[
|H_{3,1}(f)|\leq  B, \quad\mbox{where}\quad \frac49\leq B\leq \frac{32+\sqrt{285}}{15} = 3.258796\cdots
\]
\ethm

In \cite{120} the authors improved these results by proving:

\bthm\label{19-th 2}[\cite{120}]
For the class $\mathcal{S}$ we have the next estimations:
\begin{itemize}
  \item[$(i)$]  $|H_{2,2}(f)|\leq 1.3614\ldots ; $
  \item[$(ii)$]  $|H_{3,1}(f)|\leq 1.6787\ldots.  $
\end{itemize}
\ethm

\medskip

\medskip

The modulus of $H_{2,3}(f)$ was estimated In \cite{110} for the case of vanishing and non-vanishing second coefficient:

\bthm\label{22-th-2}[\cite{110}]
Let $f\in\mathcal{S}$ is given by \eqref{e1}. Then
\begin{itemize}
  \item[($i$)] $|H_{2,3}(f)|\le 2.02757\ldots$ if $a_2=0$;
  \item[($ii$)] $|H_{2,3}(f)|\le 4.8986977\ldots$ for every $f\in\es$.
\end{itemize}
\ethm

\medskip

In the next theorem we give the upper bound of the modulus of the second and the third Hankel determinant for the inverse functions of the functions from the class $\es$ (sharp for the second order determinant). The proof of the later is included since it differs from the previous ones.

\medskip

\begin{thm}[\cite{99}]\label{th-1}
Let $f\in\es$ and $f^{-1}$ be its inverse function. Then
\begin{itemize}
  \item[($i$)] $|H_{2,2}(f^{-1})| \le 3$ and the result is the best possible;
  \item[($ii$)] $|H_{3,1}(f^{-1})| \le  2.36639\ldots$.
\end{itemize}
\end{thm}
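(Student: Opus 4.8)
The plan is to run both inverse determinants through the same Grunsky machinery used above for $|\gamma_3|$. First I would express $H_{2,2}(f^{-1})$ and $H_{3,1}(f^{-1})$ in the inverse coefficients $A_2,\dots,A_5$, then substitute the inversion formulas for $A_2=-a_2,\ A_3=-a_3+2a_2^2,\dots$ to rewrite everything in $a_2,\dots,a_5$, and finally invoke \eqref{e9} to pass to the Grunsky coefficients $\omega_{2p-1,2q-1}$ of $f_2$, eliminating $\omega_{33}$ by \eqref{e11n}. This reduces each determinant to a polynomial in $\omega_{11},\omega_{13},\omega_{15}$ (and $\omega_{35}$ in the third‑order case), after which the Grunsky inequalities \eqref{eq-9} and \eqref{eq 4-b} turn the problem into maximising an explicit function of the moduli $|\omega_{11}|,|\omega_{13}|,\dots$ over a compact region.

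For part (i) the inversion gives $H_{2,2}(f^{-1})=A_2A_4-A_3^2=a_2a_4-a_3^2-a_2^2a_3+a_2^4$, and substituting \eqref{e9} together with \eqref{e11n} yields the clean form
\[
H_{2,2}(f^{-1})=4\omega_{11}\omega_{33}-4\omega_{13}^2-4\omega_{11}^2\omega_{13}+\tfrac53\omega_{11}^4 .
\]
The decisive tool is the quadratic inequality \eqref{eq 4-b}: read with $x_1,x_3$ it bounds the complex symmetric form with matrix $\left(\begin{smallmatrix}\omega_{11}&\omega_{13}\\ \omega_{13}&\omega_{33}\end{smallmatrix}\right)$ against the weight $\operatorname{diag}(1,\tfrac13)$, and since for a complex symmetric matrix $M$ one has $\max_{\|x\|=1}|x^{\top}Mx|=\sigma_{\max}(M)$, the determinant of the rescaled form satisfies $|\omega_{11}\omega_{33}-\omega_{13}^2|\le\tfrac13$. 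Sharpness is immediate from the Koebe function $k(z)=z/(1-z)^2$, whose associated $f_2(z)=z/(1-z^2)$ has $\omega_{11}=1$, $\omega_{13}=\omega_{15}=0$, $\omega_{33}=\tfrac13$, so the displayed formula returns $\tfrac43+\tfrac53=3$.

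The hard part is pinning the constant to exactly $3$ rather than to the inflated value a crude estimate produces. The extremum sits at the corner $|\omega_{11}|=1$ of the region, which forces $\omega_{13}=\omega_{15}=0$, and there the triangle inequality is badly non‑tight: completing the square rewrites
\[
H_{2,2}(f^{-1})=7\omega_{11}^4-4(\omega_{13}+\omega_{11}^2)^2+4\omega_{11}\omega_{15},
\]
displaying a genuine cancellation between $7\omega_{11}^4$ and $-4(\omega_{13}+\omega_{11}^2)^2$ at the optimum. Bounding term by term destroys this cancellation and overshoots $3$, so the argument must retain the relative phases and use the full content of \eqref{eq 4-b} (the singular‑value bound $\sigma_{\max}\le1$, which couples the size of $|\omega_{13}|$ to that of $\omega_{11}\omega_{33}-\omega_{13}^2$), not merely the separate modulus consequences in \eqref{eq-9}. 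Carrying out the resulting constrained maximisation and verifying that no interior point beats the corner value $3$ is the crux of (i).

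For part (ii) the scheme is the same but far heavier. Expanding $H_{3,1}(f^{-1})=A_3(A_2A_4-A_3^2)-A_4(A_4-A_2A_3)+A_5(A_3-A_2^2)$ and pushing it through the inversion formulas and \eqref{e9} produces, after eliminating $\omega_{33}$ via \eqref{e11n}, a polynomial in $\omega_{11},\omega_{13},\omega_{15},\omega_{35}$; here $a_5$ forces $\omega_{35}$ into play, whose modulus is controlled by the Grunsky inequality \eqref{eq-9}. One is then left with a nonconvex maximisation of an explicit function of $|\omega_{11}|,|\omega_{13}|,|\omega_{15}|,|\omega_{35}|$ over the compact region cut out by \eqref{eq-9}, the stated (non‑sharp) value $2.36639\ldots$ being its maximum. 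The obstacles in (ii) are the algebraic bulk of the $3\times3$ determinant and the genuinely multivariate optimisation: the care lies in grouping the terms so that the unavoidable modulus bounds do not inflate the maximum beyond the claimed value.
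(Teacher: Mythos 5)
Your reduction is correct and matches the paper's: the identity $H_{2,2}(f^{-1})=a_2a_4-a_3^2-a_2^2(a_3-a_2^2)$, its Grunsky form $4\omega_{11}\bigl(\omega_{33}-\omega_{11}\omega_{13}+\tfrac{5}{12}\omega_{11}^3\bigr)-4\omega_{13}^2$, and your completed-square variant all check out, as does the Koebe extremal computation. But the decisive step is missing. You correctly diagnose that bounding $4(\omega_{11}\omega_{33}-\omega_{13}^2)$ by $\tfrac43$ (via your singular-value argument) and the remaining terms $-4\omega_{11}^2\omega_{13}+\tfrac53\omega_{11}^4$ term by term overshoots $3$ (one gets roughly $3.23$ near $|\omega_{11}|^2\approx0.8$), and you then write that the crux is ``carrying out the resulting constrained maximisation'' using ``the full content of \eqref{eq 4-b}'' --- but you never say which combination to feed into the inequality, and that choice \emph{is} the proof. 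So as written the argument does not reach the constant $3$.

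The paper closes the gap with one specific substitution: take $x_1=-\tfrac12\omega_{11}$, $x_3=1$ in the two-variable quadratic Grunsky inequality \eqref{eq-7}, which gives
\[
\left|\tfrac14\omega_{11}^3-\omega_{11}\omega_{13}+\omega_{33}\right|\le\tfrac14|\omega_{11}|^2+\tfrac13,
\]
hence $\bigl|\omega_{33}-\omega_{11}\omega_{13}+\tfrac{5}{12}\omega_{11}^3\bigr|\le\tfrac16|\omega_{11}|^3+\tfrac14|\omega_{11}|^2+\tfrac13$. Inserting this and $|\omega_{13}|^2\le\tfrac13(1-|\omega_{11}|^2)$ from \eqref{eq-8} reduces everything to the single-variable polynomial $\psi(t)=\tfrac43+\tfrac43t-\tfrac43t^2+t^3+\tfrac23t^4$ on $[0,1]$, which is increasing with $\psi(1)=3$. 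In other words, the ``retained phase information'' you call for is obtained not from the determinant bound $|\omega_{11}\omega_{33}-\omega_{13}^2|\le\tfrac13$ (which provably does not suffice here), but from applying \eqref{eq-7} to a point $(x_1,x_3)$ chosen so that the quadratic form reproduces almost exactly the bracket appearing in $H_{2,2}(f^{-1})$; the leftover $\tfrac16\omega_{11}^3$ is small enough that the triangle inequality then costs nothing at the extremal point $|\omega_{11}|=1$. Your plan for (ii) has the same shape of omission: the setup is right, but no mechanism is given that actually produces the constant $2.36639\ldots$.
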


\begin{proof}
($i$) By the definition of $H_{2,2}$ and the relations \eqref{eq-12} we have
\[ H_{2,2}(f^{-1}) = A_2A_4-A_3^2 = a_2a_4-a_3^2-a_2^2(a_3-a_2^2), \]
and from here, by using \eqref{e9} and some calculations
\[  H_{2,2}(f^{-1}) = 4\omega_{11}\left(\omega_{33}-\omega_{11}\omega_{13}+\frac{5}{12}\omega_{11}^3\right) - 4\omega_{13}^2,\]
which implies
\begin{equation}\label{eq-14}
  |  H_{2,2}(f^{-1}) | \le 4|\omega_{11}| \left|\omega_{33}-\omega_{11}\omega_{13}+\frac{5}{12}\omega_{11}^3\right| + 4|\omega_{13}|^2.
\end{equation}

\medskip

If we choose $x_1=-\frac12\omega_{11}$ and $x_3=1$ in the relation \eqref{eq-7}, then we have
\begin{equation}\label{eq-13}
  \left|\frac14\omega_{11}^3-\omega_{11}\omega_{13}+\omega_{33}\right| \le \frac14 |\omega_{11}|^2+\frac13,
\end{equation}
i.e.,
\[ \left| \left(\omega_{33}-\omega_{11}\omega_{13}+\frac{5}{12}\omega_{11}^3\right) - \frac16\omega_{11}^3\right| \le \frac14 |\omega_{11}|^2+\frac13, \]
and
\[ \left| \omega_{33}-\omega_{11}\omega_{13}+\frac{5}{12}\omega_{11}^3\right| \le  \frac16|\omega_{11}|^3 + \frac14 |\omega_{11}|^2+\frac13.\]
From the last inequality and \eqref{eq-14} we get
\begin{equation}\label{eq-star}
   \left|H_{2,2}(f^{-1})\right| \le 4|\omega_{11}| \left[ \frac16|\omega_{11}|^3  + \frac14|\omega_{11}|^2 +\frac13\right] + 4|\omega_{13}|^2 .
\end{equation}
From \eqref{eq-8} we have $|\omega_{13}|^2 \le \frac13(1-|\omega_{11}|^2)$, and from \eqref{eq-star}:
\[
\begin{split}
\left|H_{2,2}(f^{-1})\right| \le \frac43 + \frac43|\omega_{11}| - \frac43|\omega_{11}|^2 + |\omega_{11}|^3 +\frac23|\omega_{11}|^4 = \psi(|\omega_{11}|),
\end{split}
\]
where $\psi(t)=\frac43+\frac43 t-\frac43t^2+t^3+\frac23t^4$ and $t=|\omega_{11}|\in[0,1]$ since by \eqref{e9}, for the class $\es$, $2|\omega_{11}| = |a_2|\le2$. Then, $$\psi'(t)= \frac43-\frac83t+3t^2+\frac83t^3 = \frac43(1-t)^2+\frac53t^2+\frac83t^3>0,$$
$t\in[0,1]$,  meaning that $\psi(t)\le\psi(1)=3$, $t\in[0,1]$. Therefore, $\left|H_{2,2}(f^{-1})\right|\le3$.

\medskip

The estimate is sharp due to the Koebe function $k(z)=\frac{z}{(1-z)^2}$ such that
\[
\begin{split}
 k^{-1}(w) &= \frac{2w+1-\sqrt{1+4w}}{2w} = w+\sum_{2}^\infty (-1)^{n+1}\frac{(2n)!}{(n+1)!n!}w^n \\
 &= w-2w^2+5w^3-14w^4+\cdots,
 \end{split}
 \]
and $H_{2,2}(k^{-1}) = A_2A_4-A_3^2=3.$
\end{proof}

\medskip

\begin{rem}
The estimate for the second Hankel determinant (Theorem \ref{th-1}(i) is of course valid, but even more sharp for all classes univalent functions containing the Koebe function, such as the classes of starlike, class $\U$, et c..
\end{rem}

\medskip

Now we will give sharp estimate of the modulus of the difference of the second and the third order Hankel determinants:

\medskip

\begin{thm}\label{cor-1}
 Let $f\in\es$. Then the following estimates are sharp:
 \begin{itemize}
   \item[($i$)] $ \left|  H_{2,2}(f^{-1})-H_{2,2}(f) \right| \le4$;
   \item[($ii$)] $ \left|  H_{3,1}(f^{-1})-H_{3,1}(f) \right| \le1$.
 \end{itemize}
\end{thm}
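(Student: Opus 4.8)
The plan is to express both Hankel determinants $H_{2,2}$ and $H_{3,1}$ for $f$ and for $f^{-1}$ in terms of the Grunsky coefficients $\omega_{11},\omega_{13},\omega_{33},\omega_{35},\omega_{15}$ via the relations in \eqref{e9} and the inverse-coefficient formulas \eqref{eq-12}, then take the difference so that the leading terms cancel. For part ($i$), I would start from $H_{2,2}(f^{-1}) = a_2a_4-a_3^2-a_2^2(a_3-a_2^2)$ (already computed in the proof of Theorem \ref{th-1}) and $H_{2,2}(f)=a_2a_4-a_3^2$, so that
\[ H_{2,2}(f^{-1})-H_{2,2}(f) = -a_2^2(a_3-a_2^2). \]
Substituting $a_2=2\omega_{11}$ and $a_3=2\omega_{13}+3\omega_{11}^2$ gives $a_3-a_2^2=2\omega_{13}-\omega_{11}^2$, hence the difference equals $-4\omega_{11}^2(2\omega_{13}-\omega_{11}^2)$. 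The task then reduces to maximising $|{-4\omega_{11}^2(2\omega_{13}-\omega_{11}^2)}|$ subject to the Grunsky constraints \eqref{eq-8}, most directly $|\omega_{11}|\le1$ and $|\omega_{13}|^2\le\tfrac13(1-|\omega_{11}|^2)$.

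For part ($ii$) I expect the same cancellation phenomenon. I would write out $H_{3,1}(f)=a_3(a_2a_4-a_3^2)-a_4(a_4-a_2a_3)+a_5(a_3-a_2^2)$ and the analogous expression for $f^{-1}$ using $A_2,\dots,A_5$ from \eqref{eq-12}. The hope — and the reason a clean constant like $1$ appears — is that after substituting \eqref{eq-12} into the inverse determinant and subtracting, the high-degree symmetric combinations in $a_2,a_3,a_4,a_5$ largely cancel, leaving a comparatively low-complexity polynomial in the $a_j$ (or directly in the $\omega$'s). I would then convert everything to $\omega_{11},\omega_{13},\omega_{33},\omega_{35}$ using \eqref{e9}, possibly eliminating $\omega_{33}$ via \eqref{e11n} and $\omega_{35}$ via the sixth relation of \eqref{e9}, to express the difference purely in terms of the independent Grunsky variables.

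With the difference written as a polynomial in the moduli $x=|\omega_{11}|,\,y=|\omega_{13}|$ (and whatever else survives), the estimation step mirrors the earlier proofs: apply the triangle inequality, invoke the hierarchy of bounds from \eqref{eq-9}, namely $|\omega_{15}|\le\tfrac{1}{\sqrt5}\sqrt{1-x^2-3y^2}$ and $|\omega_{17}|\le\tfrac{1}{\sqrt7}\sqrt{1-x^2-3y^2-5|\omega_{15}|^2}$, to bound each term, and then maximise the resulting real function over the triangular region $0\le x\le1,\ 0\le y\le\tfrac{1}{\sqrt3}\sqrt{1-x^2}$ by checking interior critical points and the boundary edges exactly as in Theorems \ref{th1} and \ref{th-1}. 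For ($i$) this optimisation should be elementary: the function $-4x^2(2y-x^2)$ is maximised in absolute value at a corner-type configuration, and I would expect the extremiser to be $x=1,\ y=0$ (forcing $\omega_{13}=0$), giving the value $4$.

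The main obstacle will be the algebra in part ($ii$): carrying the full substitution of \eqref{eq-12} into $H_{3,1}(f^{-1})$ and verifying that the difference collapses to something tractable is lengthy and error-prone, and it is not a priori obvious which cancellations occur. Once the difference is in Grunsky form, a secondary difficulty is that it may genuinely depend on the relative phases of the $\omega_{p,q}$ (not merely their moduli), so the naive triangle-inequality bound might overshoot the sharp constant $1$; achieving sharpness may require a more careful grouping of terms before applying \eqref{eq 4-b} or \eqref{eq-7} with a cleverly chosen test vector $(x_1,x_3)$, as was done in \eqref{eq-13}. Finally, I would confirm sharpness in both cases by exhibiting an explicit extremal function — the Koebe function $k(z)=z/(1-z)^2$ (or a rotation) is the natural candidate, since its inverse coefficients are known and it already attains equality for $H_{2,2}(f^{-1})=3$ in Theorem \ref{th-1}.
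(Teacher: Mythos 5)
Your starting identities are the right ones, and they match the paper's: $H_{2,2}(f^{-1})-H_{2,2}(f)=-a_2^2(a_3-a_2^2)$, and for ($ii$) the hoped-for collapse does occur, namely $H_{3,1}(f^{-1})-H_{3,1}(f)=-(a_3-a_2^2)^3$ (this follows by direct substitution of \eqref{eq-12} and is the only nontrivial computation in the paper's proof). The gap is in your estimation strategy. Once you pass to Grunsky coefficients you propose to maximise $\left|-4\omega_{11}^2(2\omega_{13}-\omega_{11}^2)\right|$ using only $|\omega_{11}|\le 1$ and $|\omega_{13}|\le\tfrac{1}{\sqrt3}\sqrt{1-|\omega_{11}|^2}$, and you treat the target as the real function $4x^2|2y-x^2|$ with $x=|\omega_{11}|$, $y=|\omega_{13}|$. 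That substitution of moduli for complex values is not legitimate: the phases of $\omega_{13}$ and $\omega_{11}^2$ can align, so the only valid bound from these constraints is $4x^2(2y+x^2)$, and on the region $y\le\tfrac{1}{\sqrt3}\sqrt{1-x^2}$ this has maximum $\approx 4.59$ (attained near $x^2\approx 0.92$), not $4$. Equivalently, these Grunsky moduli constraints only give $|a_3-a_2^2|=|2\omega_{13}-\omega_{11}^2|\le\tfrac43$, so the same route in ($ii$) would yield $(4/3)^3\approx 2.37$ rather than $1$. The missing ingredient is the classical sharp inequality $|a_3-a_2^2|\le 1$ for $f\in\es$ (a consequence of the area theorem applied to $1/f(1/\zeta)$; see Duren), which is strictly stronger than what the triangle inequality extracts from \eqref{eq-8}.

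With that inequality in hand the whole theorem is immediate and needs no Grunsky machinery at all, which is exactly what the paper does: $\left|H_{2,2}(f^{-1})-H_{2,2}(f)\right|=|a_2|^2|a_3-a_2^2|\le 4$ and $\left|H_{3,1}(f^{-1})-H_{3,1}(f)\right|=|a_3-a_2^2|^3\le 1$. Your sharpness candidates are fine: the Koebe function gives equality in ($i$) (and in fact also in ($ii$), since there $a_3-a_2^2=-1$); the paper instead exhibits $f_1(z)=z/(1-z^2)$ for ($ii$).
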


\begin{proof}$ $
   \begin{itemize}
   \item[($i$)] From \eqref{eq-13} we receive that for any function $f\in\es$,
   \[  \left|  H_{2,2}(f^{-1})-H_{2,2}(f) \right| = |a_2|^2 |a_3-a_2^2| \le4, \]
   since $|a_2|\le2$ and $|a_3-a_2^2|\le1$ (see \cite{duren}). The above estimate is sharp since for the Koebe function we have $H_{2,2}(k)=2\cdot4-3^2=-1$ and $H_{2,2}(k^{-1})=(-2)\cdot(-14)-5^2=3$, so that $ H_{2,2}(k^{-1})-H_{2,2}(k)  = 4$.

\medskip

   \item[($ii$)] 
   From the definition of the third Hankel determonant we have
\[ H_{3,1}(f^{-1}) = A_3(A_2A_4-A_{3}^2)-A_4(A_4-A_2A_3)+A_5(A_3-A_2^2).
\]
Using the relations \eqref{eq-12}, after some calculations, we receive
\[
\begin{split}
  &\quad H_{3,1}(f^{-1})\\
   &= a_3(a_2a_4-a_{3}^2)-a_4(a_4-a_2a_3)+a_5(a_3-a_2^2) - (a_3-a_2^2)^3\\
  &= H_{3,1}(f) - (a_3-a_2^2)^3,
\end{split}
\]
   and further, for $f\in\es$,
  \[  \left|  H_{3,1}(f^{-1})-H_{3,1}(f) \right| = |a_3-a_2^2|^3 \le1, \]
  for every $f\in\es$. The above inequality is also sharp as the function $f_1(z)=\frac{z}{1-z^2} = z+z^3+z^5+\cdots$ with $a_2=a_4=0$ and $a_3=a_5=1$ shows. Indeed, from \eqref{eq-12} we receive $A_2=A_4=0$, $A_3=-1$, $A_5=2$, i.e., $H_{3,1}(f_1)=0$, $H_{3,1}(f_1^{-1})=-1$ and $|H_{3,1}(f_1^{-1})-H_{3,1}(f_1)|=1$.
 \end{itemize}
\end{proof}

\medskip

\medskip

The following estimate (non-sharp) of the modulus of $H_{3,2}(f^{-1})$ was given in \cite{113}:

\bthm\label{22-th-2}[\cite{113}]
Let $f\in\mathcal{S}$ is given by \eqref{e1} and let $a_{2}=0$. Then
 $$|H_{3,2}(f^{-1})|\leq \frac{\sqrt{3}}{6\sqrt{7}}+2\sqrt{3}=3.57321\ldots.$$
\ethm

\medskip

Finally, in \cite{124} an estimate of the second Hankel dterminant for the logarithmic coefficients wa given:

\begin{thm}[\cite{124}]
Let $f\in\es$ is given by \eqref{e1}. Then
\[|H_{2,1}(F_f/2)| = \left| \gamma_1\gamma_3-\gamma_2^2 \right| \le \frac13.\]
\end{thm}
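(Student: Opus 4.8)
The plan is to push everything through to the Grunsky coefficients $\omega_{11},\omega_{13},\omega_{15}$ of $f_2$ and then apply the single inequality \eqref{eq-8}. First I would record, exactly as in the proof of Theorem \ref{th1}, that $\gamma_1=\omega_{11}$, that $\gamma_2=\omega_{13}+\tfrac12\omega_{11}^2$, and that $\gamma_3=\omega_{33}+2\omega_{11}\omega_{13}=\omega_{15}+\omega_{11}\omega_{13}+\tfrac13\omega_{11}^3$, the last equality coming from \eqref{e9} via the relation \eqref{e11n}. Substituting these into $\gamma_1\gamma_3-\gamma_2^2$ and collecting terms, the mixed monomials $\omega_{11}^2\omega_{13}$ cancel and the $\omega_{11}^4$ contributions combine as $\tfrac13-\tfrac14=\tfrac1{12}$, leaving the clean identity
\[
H_{2,1}(F_f/2)=\gamma_1\gamma_3-\gamma_2^2=\omega_{11}\omega_{15}-\omega_{13}^2+\tfrac1{12}\omega_{11}^4 .
\]
(One can equally well arrive here by expanding \eqref{loghank} through the formulas \eqref{e9}.)

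Next I would apply the triangle inequality,
\[
|H_{2,1}(F_f/2)|\le |\omega_{11}|\,|\omega_{15}|+|\omega_{13}|^2+\tfrac1{12}|\omega_{11}|^4 ,
\]
and then feed in \eqref{eq-8} in the form $|\omega_{13}|^2\le\tfrac13\bigl(1-|\omega_{11}|^2-5|\omega_{15}|^2\bigr)$. The key organizational choice is to use this bound to \emph{eliminate} the $|\omega_{13}|^2$ term while keeping $s:=|\omega_{15}|$ as the free variable, rather than eliminating $|\omega_{15}|$; this leaves a mere quadratic in $s$. Writing $x:=|\omega_{11}|\in[0,1]$, the estimate becomes
\[
|H_{2,1}(F_f/2)|\le \tfrac13+\Bigl(xs-\tfrac53 s^2+\tfrac1{12}x^4-\tfrac13 x^2\Bigr).
\]

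The decisive step is to show the bracketed expression is nonpositive. For fixed $x$ it is a downward-opening parabola in $s$ with vertex at $s=\tfrac{3x}{10}$, whose maximal value is
\[
\tfrac{3x^2}{20}+\tfrac1{12}x^4-\tfrac13 x^2=\tfrac{x^2}{60}\,(5x^2-11).
\]
Since $x\le1$ forces $5x^2-11\le-6<0$, this is $\le 0$, and hence $|H_{2,1}(F_f/2)|\le\tfrac13$. I expect the only genuine obstacle to be this organizational point: substituting the $|\omega_{13}|$-bound produces a concave quadratic that is dispatched by a single completion of the square, whereas eliminating $|\omega_{15}|$ instead would leave a square-root function of two variables requiring a full interior-critical-point-plus-boundary analysis. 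It is worth noting that the value $\tfrac13$ is approached as $\omega_{11},\omega_{15}\to 0$ with $|\omega_{13}|^2\to\tfrac13$, consistent with the equality case of \eqref{eq-8}.
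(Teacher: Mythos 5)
Your argument is correct, and every computation checks out: $\gamma_1=\omega_{11}$, $\gamma_2=\omega_{13}+\tfrac12\omega_{11}^2$, $\gamma_3=\omega_{15}+\omega_{11}\omega_{13}+\tfrac13\omega_{11}^3$ do combine to give the identity $\gamma_1\gamma_3-\gamma_2^2=\omega_{11}\omega_{15}-\omega_{13}^2+\tfrac1{12}\omega_{11}^4$ (which also agrees with expanding \eqref{loghank} directly), and the quadratic-in-$s$ estimate with maximum $\tfrac{x^2}{60}(5x^2-11)\le0$ for $x\in[0,1]$ is right. Note that the survey itself states this theorem without proof, merely citing \cite{124}, so there is no in-paper argument to match; your derivation is, however, exactly in the spirit of the Grunsky-coefficient method the paper uses for Theorems \ref{th1} and \ref{th-1}. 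Where you genuinely improve on the paper's usual template is the organizational choice you flag yourself: by using \eqref{eq-8} to eliminate $|\omega_{13}|^2$ and keeping $|\omega_{15}|$ free, the optimization collapses to completing the square in one variable, whereas the route the authors typically take (bounding $|\omega_{13}|$ and $|\omega_{15}|$ by the nested square-root inequalities \eqref{e14n} and then maximizing a two-variable function over a region) would require an interior-critical-point-plus-boundary analysis. One small caveat worth stating if you write this up: the configuration $\omega_{11}=\omega_{15}=0$, $|\omega_{13}|^2=\tfrac13$ shows only that your chain of inequalities cannot be improved, not that the bound $\tfrac13$ is attained by an actual $f\in\es$; since the theorem does not assert sharpness, this does not affect the proof.
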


\medskip

\section{On some inequalities for coefficients}

In this section, we begin with estimates of the modulus of the coefficients of $f$ and of the second and the third Hankel determinant in the case of missing second or third coefficient.

\begin{thm}\label{th-1}
Let $f\in\es$ and be given by \eqref{e1} with $a_2=0$. Then
\begin{itemize}
\item[($i$)] $|a_3|\le1$ (\cite{OTT}),
\medskip
\item[($ii$)] $|a_4|\le\dfrac23=0.666\ldots$ (\cite{OTT}),
\medskip
\item[($iii$)]  $|a_5|\le \frac{3}{4}+\frac{1}{\sqrt{7}} = 1.12796\ldots$ (\cite{119}),
\medskip
\item[($iv$)] $|H_{2,2}(f)|\le1$ (\cite{OTT}),
\medskip
\item[($v$)] $|H_{3,1}(f)|\le 1.026\ldots$ (\cite{119}).
\end{itemize}
\end{thm}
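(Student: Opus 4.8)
The plan is to exploit the single hypothesis $a_2=0$, which by the first relation in \eqref{e9} is exactly $\omega_{11}=0$. Substituting $\omega_{11}=0$ throughout \eqref{e9} collapses that system to the simple relations
\[
a_3=2\omega_{13},\quad a_4=2\omega_{33},\quad \omega_{33}=\omega_{15},\quad \omega_{35}=\omega_{17}-\omega_{13}^2,\quad a_5=2\omega_{17}+3\omega_{13}^2,
\]
the last of which comes from eliminating $\omega_{35}$ between the fourth and the sixth identities of \eqref{e9}. At the same time, setting $\omega_{11}=0$ turns the Grunsky inequalities \eqref{eq-7}, \eqref{eq-8}, \eqref{eq-9} and the seven-term refinement obtained from \eqref{eq 6-mo-5} into clean numerical constraints on $\omega_{13},\omega_{15},\omega_{17},\omega_{33},\omega_{35}$, which I would then feed into each estimate.

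Items (i) and (iv) I would dispose of at once. Item (i) is just the classical Fekete--Szeg\H{o}-type bound $|a_3-a_2^2|\le1$ (the same fact cited in the proof of Theorem \ref{cor-1}) read at $a_2=0$; and (iv) follows because $H_{2,2}(f)=a_2a_4-a_3^2=-a_3^2$, so $|H_{2,2}(f)|=|a_3|^2\le1$ by (i). For (ii), since $a_4=2\omega_{33}$, I would choose $x_1=0$, $x_3=1$ in the bilinear Grunsky inequality \eqref{eq-7} to get $|\omega_{33}|\le\tfrac13$, hence $|a_4|\le\tfrac23$.

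The key maneuver is in (iii). From $a_5=2\omega_{17}+3\omega_{13}^2$ I obtain $|a_5|\le 2|\omega_{17}|+3|\omega_{13}|^2$, while the seven-term Grunsky inequality gives $|\omega_{17}|\le\frac1{\sqrt7}\sqrt{1-3|\omega_{13}|^2}$ after discarding the nonnegative $\omega_{15}$-term. Writing $t=|\omega_{13}|$ reduces everything to maximizing $g(t)=\frac{2}{\sqrt7}\sqrt{1-3t^2}+3t^2$. The crucial observation is that (i) forces $|a_3|=2t\le1$, i.e. $t\le\tfrac12$; since the free critical point of $g$ lies at $t=\sqrt{2/7}>\tfrac12$, $g$ is increasing on $[0,\tfrac12]$ and attains its maximum at the endpoint, giving precisely $g(\tfrac12)=\frac34+\frac1{\sqrt7}$. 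Thus the domain restriction $t\le\tfrac12$ inherited from (i) is exactly what pins the maximum to the endpoint and yields the stated value.

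For (v) I would expand $H_{3,1}(f)=a_3(a_2a_4-a_3^2)-a_4(a_4-a_2a_3)+a_5(a_3-a_2^2)$ at $a_2=0$ to $H_{3,1}(f)=a_3a_5-a_4^2-a_3^3$, and substitute the Grunsky expressions to get $H_{3,1}(f)=4\omega_{13}\omega_{17}-2\omega_{13}^3-4\omega_{33}^2$. The triangle inequality together with $|\omega_{17}|\le\frac1{\sqrt7}\sqrt{1-3t^2}$, $|\omega_{33}|\le\frac13\sqrt{1-3t^2}$ (from \eqref{eq-9}) and $t\le\tfrac12$ turns the estimate into the maximization of an explicit function of the single variable $t$ on $[0,\tfrac12]$, from which the constant $1.026\ldots$ is read off. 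I expect (v) to be the main obstacle: unlike (i)--(iv), the three terms of $H_{3,1}(f)$ carry independent phases, so the triangle inequality is genuinely lossy, and one must check that the bounds chosen for $|\omega_{17}|$ and $|\omega_{33}|$ are jointly attainable over the constraint region of \eqref{eq-9} — otherwise the extracted constant is larger than it should be. The unifying idea across all five parts is that the restriction $|\omega_{13}|\le\tfrac12$, an immediate byproduct of (i), is what drives both the endpoint evaluation in (iii) and the final optimization in (v).
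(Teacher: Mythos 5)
The paper states this theorem without proof (it is a survey item citing \cite{OTT} and \cite{119}), so your reconstruction can only be judged on its own merits. Your reduction is correct: with $a_2=0$ the relations \eqref{e9} do collapse to $a_3=2\omega_{13}$, $a_4=2\omega_{33}$, $\omega_{33}=\omega_{15}$, $\omega_{35}=\omega_{17}-\omega_{13}^2$ and hence $a_5=2\omega_{17}+3\omega_{13}^2$ (I checked these against $z/(1-z^2)$), and parts (i)--(iv) then come out exactly as you describe, with (iii) reproducing the constant $\tfrac34+\tfrac1{\sqrt7}$ on the nose. One ingredient worth flagging explicitly: the restriction $t=|\omega_{13}|\le\tfrac12$ does \emph{not} follow from the Grunsky inequalities \eqref{eq-8}--\eqref{eq-9} as set up here (they only give $|\omega_{13}|\le 1/\sqrt3$, i.e.\ $|a_3|\le 2/\sqrt3$); it comes from the area theorem applied to $1/f(1/z)$, i.e.\ the classical $|a_3-a_2^2|\le1$. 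That is a legitimate outside input (the paper itself uses it in Theorem~\ref{cor-1}), but it is the one step of your argument that lives outside the Grunsky framework, and without it the endpoint evaluation in (iii) would give the larger value $8/7$.

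For (v) there is a genuine, if benign, gap: you never carry out the one-variable maximization, and the constant you predict is not what your bound produces. With $h(t)=\frac{4t}{\sqrt7}\sqrt{1-3t^2}+2t^3+\frac49(1-3t^2)$ on $[0,\tfrac12]$, the maximum is attained near $t\approx 0.397$ and equals about $0.7953$, not $1.026\ldots$; so either your estimate is genuinely sharper than the cited one (possible, since you couple $|\omega_{33}|$ to $t$ via \eqref{eq-9} and also impose $t\le\tfrac12$, which the source may not do), or there is a discrepancy you have not located. Since $0.7953<1.026\ldots$, the stated inequality would still follow a fortiori, but as written your proof of (v) asserts an unverified numerical conclusion. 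Finally, your worry about ``joint attainability'' of the bounds on $|\omega_{17}|$ and $|\omega_{33}|$ is misplaced for this purpose: each bound is a valid consequence of the Grunsky inequalities for the same function, so substituting all of them simultaneously always yields a correct upper bound; joint attainability is only relevant to whether that upper bound is sharp, and no sharpness is claimed in (v).
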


\medskip

\begin{thm}[\cite{119}]\label{th-5}
Let $f\in\es$ and be  given by \eqref{e1}, with $a_3=0$. Then
\medskip
\begin{itemize}
\item[($i$)] $|a_2|\le1$,
\medskip
\item[($ii$)] $|a_4|\le \frac14\sqrt{\frac{21}{5}}+\frac{5}{8} = 1.1373\ldots $,
\medskip
\item[($iii$)]  $|a_5|\le 1.674896577\ldots$,
\medskip
\item[($iv$)] $|H_{2,2}(f)|\le 1.1373\ldots$,
\medskip
%
\end{itemize}
\end{thm}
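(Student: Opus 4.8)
The plan is to specialise the Grunsky relations \eqref{e9} to the case $a_3=0$ and then, for each coefficient, to run the same scheme as in the proofs above: express the quantity through $\omega_{11},\omega_{15},\omega_{35}$, pass to moduli, insert the Grunsky inequalities, and reduce everything to the maximisation of a one-variable function of $x:=|\omega_{11}|$. The hypothesis $a_3=0$ enters only through the second relation in \eqref{e9}, which forces $\omega_{13}=-\frac32\omega_{11}^2$, and through the fifth relation, which then gives $\omega_{33}=\omega_{15}+\frac{11}{6}\omega_{11}^3$; these two identities are the sole structural input special to this theorem. For (i) I would not use \eqref{eq-8} (which only gives $|\omega_{11}|\le1$), but the sharp estimate $|a_3-a_2^2|\le1$ -- precisely the second Grunsky inequality for $f$ itself evaluated at $x_1=1$ (all other $x_p=0$), the relevant coefficient being the $t\,z$-coefficient of $\log\frac{f(t)-f(z)}{t-z}$, which equals $a_3-a_2^2$. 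With $a_3=0$ this reads $|a_2|^2\le1$, giving $|a_2|\le1$ and equivalently $|\omega_{11}|\le\frac12$.

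For (ii) and (iii) I would substitute the two identities into the formulas for $a_4$ and $a_5$ in \eqref{e9}. A short computation collapses them to $a_4=2\omega_{15}-5\omega_{11}^3$ and $a_5=2\omega_{35}+8\omega_{11}\omega_{15}+\frac54\omega_{11}^4$, so that $|a_4|\le 5x^3+2|\omega_{15}|$ and $|a_5|\le \frac54x^4+8x|\omega_{15}|+2|\omega_{35}|$. Next I bound the remaining moduli in terms of $x$: from \eqref{eq-8}, using $|\omega_{13}|=\frac32x^2$, one gets $|\omega_{15}|\le\frac1{\sqrt5}\sqrt{1-x^2-\frac{27}{4}x^4}$, and from $|\omega_{13}|^2+3|\omega_{33}|^2+5|\omega_{35}|^2\le\frac13$, discarding the nonnegative $|\omega_{33}|^2$ term, one gets $|\omega_{35}|\le\frac1{\sqrt5}\sqrt{\frac13-\frac94x^4}$. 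This yields the single-variable majorants $g_4(x)=5x^3+\frac2{\sqrt5}\sqrt{1-x^2-\frac{27}{4}x^4}$ and $g_5(x)=\frac54x^4+\frac8{\sqrt5}x\sqrt{1-x^2-\frac{27}{4}x^4}+\frac2{\sqrt5}\sqrt{\frac13-\frac94x^4}$.

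It then remains to maximise these on the admissible range $0\le x\le\frac12$ supplied by (i). For $g_4$ the derivative is positive near $x=\frac12$, so the maximum is attained at the right endpoint; evaluating there gives $\frac58+\frac14\sqrt{21/5}=1.1373\ldots$, which is (ii), and here the bound $x\le\frac12$ from (i) is genuinely active. For $g_5$, by contrast, the maximiser is interior (around $x\approx0.42$, safely below $\frac12$), where the critical-point equation is transcendental and returns the stated $1.674896577\ldots$, which is (iii); here (i) is not binding. Finally (iv) is immediate: since $a_3=0$ we have $H_{2,2}(f)=a_2a_4$, hence $|H_{2,2}(f)|=|a_2|\,|a_4|\le 1\cdot1.1373\ldots$ by (i) and (ii).

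The main obstacle is twofold. First, the single-variable optimisations, especially for $g_5$, have no closed-form maximisers, so the constants emerge only numerically; one must also confirm the endpoint-versus-interior behaviour by the sign of the derivative rather than assume it. Second, and more essential, the modulus bound for $a_5$ is intrinsically lossy: passing to $\frac54x^4+8x|\omega_{15}|+2|\omega_{35}|$ silently aligns the phases of all three summands, yet the constraint feeding $|\omega_{35}|$ runs through $\omega_{33}=\omega_{15}+\frac{11}{6}\omega_{11}^3$ and so pulls the phase of $\omega_{15}$ the other way. The triangle inequality is therefore not sharp here, which is consistent with (iii) carrying no sharpness claim; the actual value of the constant depends on which Grunsky inequality one spends on each modulus and on how much of the $|\omega_{33}|^2$ term is thrown away, and reproducing $1.674896577\ldots$ exactly is a matter of the particular (mildly wasteful) allocation above.
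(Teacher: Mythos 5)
Your proposal is correct and follows exactly the Grunsky-coefficient scheme the paper uses in its other proofs (the paper itself omits the proof of this theorem, deferring to \cite{119}): the key identities $\omega_{13}=-\tfrac32\omega_{11}^2$ and $\omega_{33}=\omega_{15}+\tfrac{11}{6}\omega_{11}^3$, hence $a_4=2\omega_{15}-5\omega_{11}^3$ and $a_5=2\omega_{35}+8\omega_{11}\omega_{15}+\tfrac54\omega_{11}^4$, all check out, your use of $|a_3-a_2^2|\le1$ for part (i) is the right extra ingredient (the inequalities \eqref{eq-8}--\eqref{eq-9} alone only give $|\omega_{11}|\le0.563\ldots$), and your majorants reproduce the stated constants. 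The only step needing one more word is the endpoint claim in (ii): $g_4$ is not monotone on $[0,\tfrac12]$ (it decreases near $0$ before increasing), so one should also compare with $g_4(0)=2/\sqrt5=0.894\ldots<1.1373\ldots$ before concluding the maximum sits at $x=\tfrac12$ --- a point you yourself flag and which causes no difficulty.
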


\medskip

A long standing problem in the theory of univalent functions is to find  sharp upper and lower bounds for $|a_{n+1}|-|a_n|$, when $f\in\mathcal{S}$. Since  the Keobe function has coefficients $a_n=n$, it is natural to conjecture  that $||a_{n+1}|-|a_n||\le1$. As early as 1933, this was shown to be false even when $n=2,$ when Fekete and Szeg\"o \cite{FekSze33} obtained the sharp bounds
 $$ -1 \leq |a_3| - |a_2| \leq \frac{3}{4} + e^{-\lambda_0}(2e^{-\lambda_0}-1) = 1.029\ldots, $$
where $\lambda_0$ is the unique value of $\lambda$ in $0 < \lambda <1$, satisfying the equation $4\lambda = e^{\lambda}$.

\medskip

Hayman \cite{Hay63} showed that if $f \in {\mathcal S}$, then $| |a_{n+1}| - |a_n| | \leq C$, where $C$ is an absolute constant. The exact value of $C$ is unknown,  the best estimate to date
being $C=3.61\ldots$ \cite{Gri76}, which because of the sharp estimate above when $n=2$, cannot be reduced to $1$.

\medskip

\begin{thm}\label{th-3}
Let $f\in\es$ and be given by \eqref{e1}. Then
\begin{itemize}
  \item[($i$)] $|a_4|-|a_3| \le 1.75185\ldots.$ (\cite{119}),
  \item[($ii$)] $|a_5|-|a_3| \le \frac{2}{\sqrt7}=0.7559\ldots$ if $f$ is an odd function (\cite{119}),
  \item[($iii$)] $|a_2a_3-a_4| \le 2.10064\ldots$  (\cite{124}),
  \item[($iv$)] $|a_5|-|a_4| \le 2.3297\ldots$ (\cite{106}).
\end{itemize}
\end{thm}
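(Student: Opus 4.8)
The plan is to prove all four items by the single Grunsky-coefficient scheme already used for Theorem \ref{th1}: express the relevant coefficients of $f$ through the Grunsky coefficients $\omega_{2p-1,2q-1}$ of $f_2=\sqrt{f(z^2)}$ via \eqref{e9}, eliminate $\omega_{33}$ and $\omega_{35}$ using the two homogeneous relations in \eqref{e9}, and then reduce each estimate to a real maximization. Concretely, the fifth relation gives $\omega_{33}=\omega_{15}-\omega_{11}\omega_{13}+\tfrac13\omega_{11}^3$, whence
\[
a_3=2\omega_{13}+3\omega_{11}^2,\qquad a_4=2\omega_{15}+6\omega_{11}\omega_{13}+4\omega_{11}^3,
\]
and, after using the sixth relation to remove $\omega_{35}$,
\[
a_5=2\omega_{17}+6\omega_{11}\omega_{15}+12\omega_{11}^2\omega_{13}+5\omega_{11}^4+3\omega_{13}^2 .
\]
These identities are the common algebraic input for every part.

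For the differences of moduli in ($i$), ($ii$) and ($iv$) I would recycle the device from the proof of the bound $|\gamma_3|-|\gamma_2|\le\tfrac1{\sqrt5}$: since $|\omega_{11}|\le1$ one has $|a_k|\ge|\omega_{11}|\,|a_k|$, so $|a_{k+1}|-|a_k|\le|a_{k+1}|-|\omega_{11}a_k|\le|a_{k+1}-\omega_{11}a_k|$ by the reverse triangle inequality. Carrying this out yields
\[
|a_4|-|a_3|\le\bigl|2\omega_{15}+4\omega_{11}\omega_{13}+\omega_{11}^3\bigr|,\qquad
|a_5|-|a_4|\le\bigl|2\omega_{17}+4\omega_{11}\omega_{15}+6\omega_{11}^2\omega_{13}+\omega_{11}^4+3\omega_{13}^2\bigr|.
\]
Item ($iii$) needs no such step, because $a_2a_3-a_4=-2\bigl(\omega_{15}+\omega_{11}\omega_{13}-\omega_{11}^3\bigr)$ is already a single modulus. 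Applying the triangle inequality to each right-hand side produces a polynomial in $|\omega_{11}|,|\omega_{13}|,|\omega_{15}|,|\omega_{17}|$ with nonnegative coefficients.

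The final step is to insert the Grunsky estimates. Choosing $x_1=1$ and the other $x_{2p-1}=0$ in \eqref{eq 6-mo-5} gives $|\omega_{11}|^2+3|\omega_{13}|^2+5|\omega_{15}|^2+7|\omega_{17}|^2\le1$, hence the nested bounds $|\omega_{13}|\le\tfrac1{\sqrt3}\sqrt{1-|\omega_{11}|^2}$, $|\omega_{15}|\le\tfrac1{\sqrt5}\sqrt{1-|\omega_{11}|^2-3|\omega_{13}|^2}$ and $|\omega_{17}|\le\tfrac1{\sqrt7}\sqrt{1-|\omega_{11}|^2-3|\omega_{13}|^2-5|\omega_{15}|^2}$. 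Substituting these and writing $x=|\omega_{11}|$, $y=|\omega_{13}|$, I would maximize over the compact region $0\le x\le1$, $0\le y\le\tfrac1{\sqrt3}\sqrt{1-x^2}$, exactly as in Theorem \ref{th1}. Items ($i$) and ($iii$) then become two-variable problems, for $\tfrac2{\sqrt5}\sqrt{1-x^2-3y^2}+4xy+x^3$ and $\tfrac2{\sqrt5}\sqrt{1-x^2-3y^2}+2xy+2x^3$ respectively, each solved by comparing the unique interior critical point with the four edges, giving $1.75185\ldots$ and $2.10064\ldots$. Item ($ii$) is the cleanest: an odd $f$ has $a_2=a_4=0$, so $\omega_{11}=0$, the expressions collapse to $a_3=2\omega_{13}$ and $a_5=2\omega_{17}+3\omega_{13}^2$, and one is left with the single variable bound $|a_5|-|a_3|\le\tfrac2{\sqrt7}\sqrt{1-3y^2}+3y^2-2y$ on $y\in[0,1/\sqrt3]$, whose derivative equals $-2$ at $y=0$, forcing the maximum $2/\sqrt7$ at the endpoint $y=0$.

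The \emph{main obstacle} is ($iv$). After the reductions above its bound is a genuine three-variable function, schematically
\[
\tfrac2{\sqrt7}\sqrt{1-x^2-3y^2-5z^2}+4xz+6x^2y+x^4+3y^2,\qquad x=|\omega_{11}|,\ y=|\omega_{13}|,\ z=|\omega_{15}|,
\]
to be maximized over $x^2+3y^2+5z^2\le1$. Locating the interior extremum requires solving a $3\times3$ nonlinear system, and every face and edge of the region must be inspected separately; this is the delicate numerical analysis yielding $2.3297\ldots$, and is why ($iv$) is attributed to a distinct source. Once the algebraic identities for $a_3,a_4,a_5$ and the reverse-triangle reductions are in place, the remaining three items are routine.
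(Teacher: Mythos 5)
The paper itself states Theorem \ref{th-3} without proof (it is a survey item citing \cite{119}, \cite{124}, \cite{106}), so your proposal can only be judged against the method the paper illustrates elsewhere. Your algebra is correct: the eliminations $a_4=2\omega_{15}+6\omega_{11}\omega_{13}+4\omega_{11}^3$ and $a_5=2\omega_{17}+6\omega_{11}\omega_{15}+12\omega_{11}^2\omega_{13}+5\omega_{11}^4+3\omega_{13}^2$ follow from \eqref{e9}, and the reverse-triangle device $|a_{k+1}|-|a_k|\le|a_{k+1}-\omega_{11}a_k|$ is exactly the trick used in the paper's proof of $|\gamma_3|-|\gamma_2|\le 1/\sqrt5$. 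For ($i$) your functional $\tfrac{2}{\sqrt5}\sqrt{1-x^2-3y^2}+4xy+x^3$ does attain its maximum $1.75185\ldots$ at the interior critical point, for ($ii$) the conclusion is immediate (indeed $3y^2-2y\le0$ on $[0,1/\sqrt3]$ makes the endpoint argument trivial, which is cleaner than appealing to the sign of the derivative at $y=0$ alone), and for ($iv$) the three-variable maximum is consistent with $2.3297\ldots$.

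The genuine gap is in ($iii$). After the full triangle inequality you must maximize $g(x,y)=\tfrac{2}{\sqrt5}\sqrt{1-x^2-3y^2}+2xy+2x^3$, but this function exceeds the stated bound: at the feasible interior point $(x,y)=(0.97,\,0.1)$ one already gets $g=1.8253\ldots+0.194+0.1526\ldots=2.1719\ldots>2.10064$, and the actual interior maximum is $\approx2.1739$ at $(0.9717\ldots,0.1071\ldots)$. So your route proves only $|a_2a_3-a_4|\le 2.1739\ldots$, not the theorem as stated. The missing idea is to \emph{not} separate $\omega_{13}$ from $\omega_{11}^2$: write $a_2a_3-a_4=-2\bigl(\omega_{15}+\omega_{11}(\omega_{13}-\omega_{11}^2)\bigr)$ and use the classical inequality $|a_3-a_2^2|\le1$, which in Grunsky form reads $|\omega_{13}-\tfrac12\omega_{11}^2|\le\tfrac12$ and hence $|\omega_{13}-\omega_{11}^2|\le\tfrac12(1+|\omega_{11}|^2)$. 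Combined with $|\omega_{15}|\le\tfrac1{\sqrt5}\sqrt{1-|\omega_{11}|^2}$ this reduces ($iii$) to the one-variable bound
\[
|a_2a_3-a_4|\le \frac{2}{\sqrt5}\sqrt{1-x^2}+x+x^3,\qquad x=|\omega_{11}|\in[0,1],
\]
whose maximum, attained near $x=0.974$, is $2.10063\ldots$, matching the stated constant.
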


\medskip

\begin{rem}
We believe that $|a_2a_3-a_4| \le 2$ is true for the class $\es$.
\end{rem}

\vspace{5mm}

\end{document}